\newcommand{\N}{{\mathbb N}}
\theoremstyle{plain}
\newtheorem{lemma}{Lemma}
\newtheorem{theorem}{Theorem}
\theoremstyle{remark}
\newtheorem{problem}{Problem}
\newcommand{\refl}[1]{~\ref{l:#1}}
\newcommand{\reft}[1]{~\ref{t:#1}}
\newcommand{\refe}[1]{\eqref{e:#1}}
\newcommand{\refb}[1]{\cite{b:#1}}
\newcommand{\seq}{\subseteq}
\newcommand{\longc}{,\dotsc,}
\title[Integer sets with identical representation functions]%
  {Integer sets \\ with identical representation functions}
\author{Yong-Gao Chen}
\address{School of Mathematical Sciences and Institute of Mathematics,
         Nanjing Normal University, Nanjing 210023, China}
\email{ygchen@njnu.edu.cn}
\author{Vsevolod F. Lev}
\address{Department of Mathematics, The University of Haifa at Oranim,
  Tivon 36006, Israel}
\email{seva@math.haifa.ac.il}
\subjclass[2010]{11B34}
\keywords{Representation function}
\begin{document}
\baselineskip = 16pt

\begin{abstract}
We present a versatile construction allowing one to obtain pairs of integer
sets with infinite symmetric difference, infinite intersection, and identical
representation functions.
\end{abstract}

\maketitle

Let $\N_0$ denote the set of all non-negative integers. To every subset
$A\seq\N_0$ corresponds its representation function $R_A$ defined by
  $$ R_A(n) := | \{ (a',a'')\in A\times A\colon n=a'+a'',\ a'<a'' \} |; $$
that is, $R_A(n)$ is the number of unordered representations of the integer
$n$ as a sum of two distinct elements of $A$.

Answering a question of S\'ark\"ozy, Dombi \refb{d} constructed sets
$A,B\seq\N_0$ with infinite symmetric difference such that $R_A=R_B$. The
result of Dombi was further extended and developed in \refb{cw} (where a
different representation function was considered) and \refb{l} (a simple
common proof of the results from \refb{d} and \refb{cw} using generating
functions); other related results can be found in \cite{b:c,b:ct,b:q,b:t}.

The two sets constructed by Dombi actually \emph{partition} the ground set
$\N_0$, which makes one wonder whether one can find $A,B\seq\N_0$ with
$R_A=R_B$ so that not only the symmetric difference of $A$ and $B$, but also
their intersection is infinite. Tang and Yu \refb{ty} proved that if
 $A\cup B=\N_0$ and $R_A(n)=R_B(n)$ for all sufficiently large integer $n$,
then at least one cannot have $A\cap B=4\N_0$ (here and below $k\N_0$ denotes
the dilate of the set $\N_0$ by the factor $k$). They further conjectured
that, indeed, under the same assumptions, the intersection $A\cap B$ cannot
be an infinite arithmetic progression, unless $A=B=\N_0$. The main goal of
this note is to resolve the conjecture of Tang and Yu in the negative by
constructing an infinite family of pairs of sets $A,B\seq\N_0$ with $R_A=R_B$
such that $A\cup B=\N_0$, while $A\cap B$ is an infinite arithmetic
progression properly contained in $\N_0$. Our method also allows one to
easily construct sets $A,B\seq\N_0$ with $R_A=R_B$ such that both their
symmetric difference and intersection are infinite, while their union is
arbitrarily sparse and the intersection is \emph{not} an arithmetic
progression.

For sets $A,B\seq\N_0$ and integer $m$ let
 $A-B:=\{a-b\colon (a,b)\in A\times B\}$ and $m+A:=\{m+a\colon a\in A\}$.

The following basic lemma is in the heart of our construction.
\begin{lemma}\label{l:main}
Suppose that $A_0,B_0\seq\N_0$ satisfy $R_{A_0}=R_{B_0}$, and that $m$ is a
non-negative integer with $m\notin(A_0-B_0)\cup(B_0-A_0)$. Then, letting
  $$ A_1 := A_0\cup(m+B_0)\ \text{and}\ B_1:=B_0\cup(m+A_0), $$
we have $R_{A_1}=R_{B_1}$ and furthermore
\begin{itemize}
\item[i)]  $A_1\cup B_1=(A_0\cup B_0)\cup(m+A_0\cup B_0)$;
\item[ii)] $A_1\cap B_1\supseteq (A_0\cap B_0)\cup(m+A_0\cap B_0)$, the
    union being disjoint.
\end{itemize}
Moreover, if $m\notin(A_0-A_0)\cup(B_0-B_0)$, then also in i) the union is
disjoint, and in ii) the inclusion is in fact an equality.

In particular, if $A_0\cup B_0=[0,m-1]$, then $A_1\cup B_1=[0,2m-1]$, and if
$A_0$ and $B_0$ indeed \emph{partition} the interval $[0,m-1]$, then $A_1$
and $B_1$ partition the interval $[0,2m-1]$.
\end{lemma}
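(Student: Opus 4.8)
The plan is to prove the main identity $R_{A_1}=R_{B_1}$ by a direct counting argument, and then to read off i), ii), the ``Moreover'' clause, and the ``In particular'' corollary from elementary set algebra. The first observation is that the hypothesis $m\notin(A_0-B_0)\cup(B_0-A_0)$ says precisely that $A_0\cap(m+B_0)=\emptyset$ and $B_0\cap(m+A_0)=\emptyset$; indeed, $m\in A_0-B_0$ means $a-b=m$ for some $a\in A_0,\,b\in B_0$, that is $a=m+b\in A_0\cap(m+B_0)$. Consequently both unions defining $A_1$ and $B_1$ are \emph{disjoint} unions, which is exactly what makes the counting below transparent.

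For the main identity I would classify the unordered pairs counted by $R_{A_1}(n)$ according to which of the two disjoint blocks $A_0$ and $m+B_0$ their members occupy. This splits $R_{A_1}(n)$ into three contributions: pairs with both members in $A_0$, contributing $R_{A_0}(n)$; pairs with both members in $m+B_0$, contributing $R_{B_0}(n-2m)$; and ``mixed'' pairs with one member $a\in A_0$ and one member $m+b$ with $b\in B_0$, whose number equals $M(n):=|\{(a,b)\in A_0\times B_0\colon a+b=n-m\}|$ (disjointness of the blocks guarantees $a\neq m+b$, so each such pair is counted once). The same split applied to $R_{B_1}(n)$ relative to the blocks $B_0$ and $m+A_0$ yields $R_{B_0}(n)$, $R_{A_0}(n-2m)$, and the \emph{identical} mixed count $M(n)$. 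Subtracting, the mixed terms cancel and the leftover difference is $(R_{A_0}(n)-R_{B_0}(n))+(R_{B_0}(n-2m)-R_{A_0}(n-2m))$, which is identically zero since $R_{A_0}=R_{B_0}$. (Alternatively, writing $f_A(x)=\sum_{a\in A}x^a$ one has $f_A(x)^2-f_A(x^2)=2\sum_n R_A(n)x^n$; disjointness gives $f_{A_1}=f_{A_0}+x^mf_{B_0}$ and $f_{B_1}=f_{B_0}+x^mf_{A_0}$, and a one-line computation shows the two sides differ by $(1-x^{2m})\bigl[(f_{A_0}(x)^2-f_{A_0}(x^2))-(f_{B_0}(x)^2-f_{B_0}(x^2))\bigr]$, which vanishes.)

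Parts i) and ii) are pure set algebra. For i), distributing gives $A_1\cup B_1=A_0\cup(m+B_0)\cup B_0\cup(m+A_0)=(A_0\cup B_0)\cup(m+(A_0\cup B_0))$. For ii), I would expand $A_1\cap B_1=(A_0\cup(m+B_0))\cap(B_0\cup(m+A_0))$ into the four pieces $A_0\cap B_0$, $(m+B_0)\cap(m+A_0)$, $A_0\cap(m+A_0)$, and $(m+B_0)\cap B_0$. The first two are $A_0\cap B_0$ and $m+(A_0\cap B_0)$, yielding the stated inclusion; and these two are disjoint, because a common element $c$ would have $c\in A_0$ and $c-m\in B_0$, forcing $m\in A_0-B_0$, contrary to the hypothesis.

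For the ``Moreover'' clause, the extra hypothesis $m\notin(A_0-A_0)\cup(B_0-B_0)$ makes the two remaining cross-pieces $A_0\cap(m+A_0)$ and $(m+B_0)\cap B_0$ empty, so the inclusion in ii) becomes equality; and in i) any element lying in both $A_0\cup B_0$ and $m+(A_0\cup B_0)$ would, across the four possible cases, witness $m$ in one of $A_0-A_0$, $A_0-B_0$, $B_0-A_0$, $B_0-B_0$, all of which are now excluded, so that union is disjoint too. Finally the ``In particular'' corollary follows at once: if $A_0\cup B_0=[0,m-1]$ then every difference of two of its members lies in $[-(m-1),m-1]$, so all four difference sets avoid $m$ and both hypotheses hold automatically; hence i) gives $A_1\cup B_1=[0,m-1]\cup[m,2m-1]=[0,2m-1]$, and if in addition $A_0\cap B_0=\emptyset$ then the equality form of ii) gives $A_1\cap B_1=\emptyset$, so $A_1,B_1$ partition $[0,2m-1]$. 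The one point needing genuine care is the bookkeeping in the main identity---verifying that the mixed contributions to $R_{A_1}$ and $R_{B_1}$ are literally the same quantity $M(n)$---so I expect that to be the crux, the remainder being routine.
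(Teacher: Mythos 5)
Your proposal is correct and follows essentially the same route as the paper: the paper's proof consists of exactly your three-way decomposition $R_{A_1}(n)=R_{A_0}(n)+R_{B_0}(n-2m)+|\{(a_0,b_0)\in A_0\times B_0\colon a_0+b_0=n-m\}|$ (justified by the disjointness $A_0\cap(m+B_0)=\emptyset$), the symmetric identity for $R_{B_1}$, and cancellation via $R_{A_0}=R_{B_0}$, with the set-algebraic assertions i), ii), and the two closing clauses left as ``straightforward to verify'' --- which you have simply carried out explicitly, along with a correct generating-function alternative. No gaps; the crux you identified (the two mixed counts being literally the same quantity) is precisely the point the paper's displayed equations make.
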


\begin{proof}
Since the assumption $m\notin A_0-B_0$ ensures that $A_0$ is disjoint
from $m+B_0$, for any integer $n$ we have
  $$ R_{A_1}(n) = R_{A_0}(n) + R_{B_0}(n-2m)
                 + | \{ (a_0,b_0)\in A_0\times B_0 \colon a_0+b_0=n-m\} |. $$
Similarly,
  $$ R_{B_1}(n) = R_{B_0}(n) + R_{A_0}(n-2m)
                + | \{ (a_0,b_0)\in A_0\times B_0 \colon a_0+b_0=n-m\} |, $$
and in view of $R_{A_0}=R_{B_0}$, this gives $R_{A_1}=R_{B_1}$. The
remaining assertions are straightforward to verify.
\end{proof}

Given subsets $A_0,B_0\seq\N_0$ and a sequence $(m_i)_{i\in\N_0}$ with
$m_i\in\N_0$ for each $i\in\N_0$, define subsequently
\begin{equation}\label{e:AB1}
  A_i := A_{i-1}\cup (m_{i-1}+B_{i-1})\ \text{and}
                  \ B_i:=B_{i-1}\cup(m_{i-1}+A_{i-1}),\quad i=1,2,\dotsc
\end{equation}
and let
\begin{equation}\label{e:AB2}
  A:=\cup_{i\in\N_0}A_i,\ B:=\cup_{i\in\N_0}B_i.
\end{equation}
As an immediate corollary of Lemma~\refl{main}, if $R_{A_0}=R_{B_0}$ and
$m_i\notin(A_i-B_i)\cup(B_i-A_i)$ for each $i\in\N_0$, then $R_A=R_B$.

The special case $A_0=\{0\}$, $B_0=\{1\}$, $m_i=2^{i+1}$ yields the
partition of Dombi (which, we remark, was originally expressed in
completely different terms). Below we analyze yet another special case
obtained by fixing arbitrarily an integer
 $l\ge 1$ and choosing $A_0:=\{0\}$, $B_0:=\{1\}$, and
\begin{equation}\label{e:m}
  m_i := \begin{cases}
              2^{i+1},           &\ 0\le i\le 2l-2,  \\
              2^{2l}-1,          &\ i=2l-1, \\
              2^{i+1}-2^{i-2l},  &\ i\ge 2l.
            \end{cases}
\end{equation}
We notice that $R_{A_0}=R_{B_0}$ in a trivial way (both functions are
identically equal to $0$), and that $A_0$ and $B_0$ partition the interval
$[0,m_0-1]$. Applying Lemma~\refl{main} inductively $2l-2$ times, we conclude
that in fact for each $i\le 2l-2$, the sets $A_i$ and $B_i$ partition the
interval $[0,2m_{i-1}-1]=[0,m_i-1]$, and consequently
$m_i\notin(A_i-B_i)\cup(B_i-A_i)$ and $m_i\notin(A_i-A_i)\cup(B_i-B_i)$. In
particular, $A_{2l-2}$ and $B_{2l-2}$ partition $[0,m_{2l-2}-1]$, and
therefore $A_{2l-1}$ and $B_{2l-1}$ partition $[0,2m_{2l-2}-1]=[0,m_{2l-1}]$.
In addition, it is easily seen that $A_{2l-1}$ contains both $0$ and
$m_{2l-1}$, whence $m_{2l-1}\in A_{2l-1}-A_{2l-1}$, but $m_{2l-1}\notin
B_{2l-1}-B_{2l-1}$ and
$m_{2l-1}\notin(A_{2l-1}-B_{2l-1})\cup(B_{2l-1}-A_{2l-1})$. From
Lemma~\refl{main} i) it follows now that
 $A_{2l}\cup B_{2l}=[0,2m_{2l-1}]=[0,m_{2l}-1]$, while
  $$ A_{2l}\cap B_{2l}=\Big( A_{2l-1}\cap(m_{2l-1}+A_{2l-1}) \Big)
      \cup \Big( B_{2l-1}\cap(m_{2l-1}+B_{2l-1}) \Big) = \{m_{2l-1}\}. $$
Applying again Lemma~\refl{main} we then conclude that for each
 $i\ge 2l$,
  $$ A_i \cup B_i = [0,m_i-1] $$
(implying $m_i\notin (A_i-B_i)\cup(B_i-A_i)$) and
  $$ A_i \cap B_i
           = m_{2l-1} + \{0,m_{2l},2m_{2l}\longc(2^{i-2l}-1)m_{2l} \}. $$

As a result, with $A$ and $B$ defined by~\refe{AB2}, we have
 $A\cup B=\N_0$ while the intersection of $A$ and $B$ is the infinite
arithmetic progression $m_{2l-1}+m_{2l}\N_0$. Moreover, the condition
$m_i\notin(A_i-B_i)\cup(B_i-A_i)$, which we have verified above to hold
for each $i\ge 0$, results in $R_A=R_B$.

We thus have proved
\begin{theorem}\label{t:main}
Let $l$ be a positive integer, and suppose that the sets $A,B\seq\N_0$
are obtained as in \refe{AB1}--\refe{AB2} starting from $A_0=\{0\}$ and
$B_0=\{1\}$, with $(m_i)$ defined by~\refe{m}. Then $R_A=R_B$, while
 $A\cup B=\N_0$ and $A\cap B=(2^{2l}-1)+(2^{2l+1}-1)\N_0$.
\end{theorem}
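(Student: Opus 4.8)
The plan is to run the recursion \refe{AB1} forward and establish, by induction on $i$, explicit descriptions of $A_i\cup B_i$ and $A_i\cap B_i$ organised according to the three branches of \refe{m}; the three assertions of the theorem then follow by passing to the limit \refe{AB2}. Throughout, the engine is Lemma\refl{main}: at each step I only need to verify its hypotheses for the current value $m=m_i$, read off the union and intersection, and --- crucially --- decide whether the strong ``moreover'' clause applies (forcing a disjoint union and equality in ii)) or only the weak hypothesis (allowing the intersection to grow). The condition $m_i\notin(A_i-B_i)\cup(B_i-A_i)$ will hold at every level, so the corollary to Lemma\refl{main} noted above will deliver $R_A=R_B$ for free once the set-theoretic bookkeeping is done.

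First I would treat the Dombi-type doubling phase $0\le i\le 2l-2$, where $m_i=2^{i+1}$. Here I claim that $A_i$ and $B_i$ partition $[0,m_i-1]=[0,2^{i+1}-1]$, which I prove by induction from the trivial base $A_0=\{0\}$, $B_0=\{1\}$. Since every element lives in the bounded interval $[0,m_i-1]$, all differences have absolute value at most $m_i-1<m_i$, so both $m_i\notin(A_i-B_i)\cup(B_i-A_i)$ and $m_i\notin(A_i-A_i)\cup(B_i-B_i)$ hold automatically; thus the ``moreover'' clause of Lemma\refl{main} applies, the union stays disjoint, the intersection stays empty, and the relation $2m_{i-1}=m_i$ propagates the partition of $[0,m_i-1]$ to the next level.

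The delicate step --- and the one I expect to be the main obstacle --- is the transition using $m_{2l-1}=2^{2l}-1=2m_{2l-2}-1$. Now $A_{2l-1}$ and $B_{2l-1}$ partition the \emph{closed} interval $[0,m_{2l-1}]$ (the right endpoint is attained, not excluded), and I must locate its two endpoints. One has $0\in A_0\seq A_{2l-1}$; to see $m_{2l-1}\in A_{2l-1}$ I track the maximal element through the doubling and use that at the \emph{even} level $2l-2$ it lies in $B_{2l-2}$, so that $m_{2l-1}=m_{2l-2}+(m_{2l-2}-1)\in m_{2l-2}+B_{2l-2}\seq A_{2l-1}$. Because $0$ and $m_{2l-1}$ now lie in the same set, $m_{2l-1}\in A_{2l-1}-A_{2l-1}$ and the strong clause \emph{fails}; yet the weak hypothesis $m_{2l-1}\notin(A_{2l-1}-B_{2l-1})\cup(B_{2l-1}-A_{2l-1})$ still holds, since a difference equal to $m_{2l-1}$ would require $m_{2l-1}$ and $0$ to lie in opposite sets, whereas both lie in $A_{2l-1}$. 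Lemma\refl{main} then yields $R_{A_{2l}}=R_{B_{2l}}$ and $A_{2l}\cup B_{2l}=[0,2m_{2l-1}]=[0,m_{2l}-1]$, while a direct expansion of $A_{2l}\cap B_{2l}$ collapses to the single point $\{m_{2l-1}\}$, seeding the arithmetic progression.

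Finally I would handle the main phase $i\ge 2l$, where $m_i=2^{i+1}-2^{i-2l}$, by induction with the hypotheses $A_i\cup B_i=[0,m_i-1]$ and $A_i\cap B_i=m_{2l-1}+\{0,m_{2l}\longc(2^{i-2l}-1)m_{2l}\}$. As in the first phase the union sits inside $[0,m_i-1]$, so every difference condition (including the strong one) holds automatically; hence Lemma\refl{main} gives the disjoint union $A_{i+1}\cup B_{i+1}=[0,2m_i-1]=[0,m_{i+1}-1]$ and, by the equality case of ii), the doubling $A_{i+1}\cap B_{i+1}=(A_i\cap B_i)\cup(m_i+A_i\cap B_i)$. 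The point that makes the progression close up is the identity $m_i=2^{i-2l}m_{2l}$ --- precisely the reason for the third branch of \refe{m} --- which shifts the translated copy so as to append exactly the terms $2^{i-2l}m_{2l}\longc(2^{i+1-2l}-1)m_{2l}$, completing the induction. Passing to the union over all $i$ (legitimate since $A_i\seq A_{i+1}$ and $B_i\seq B_{i+1}$) yields $A\cup B=\N_0$ and $A\cap B=m_{2l-1}+m_{2l}\N_0=(2^{2l}-1)+(2^{2l+1}-1)\N_0$, while $R_A=R_B$ follows from the corollary to Lemma\refl{main}, as the condition $m_i\notin(A_i-B_i)\cup(B_i-A_i)$ has been checked at every level.
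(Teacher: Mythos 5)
Your proposal is correct and follows essentially the same route as the paper: the doubling phase where $A_i,B_i$ partition $[0,m_i-1]$ (so all difference conditions hold for size reasons), the transition step with $m_{2l-1}=2^{2l}-1$ where both $0$ and $m_{2l-1}$ land in $A_{2l-1}$ (so only the weak hypothesis of Lemma~\ref{l:main} survives and the intersection becomes $\{m_{2l-1}\}$), and the final induction using $m_i=2^{i-2l}m_{2l}$ to grow the intersection into the progression $m_{2l-1}+m_{2l}\N_0$. Your explicit parity-tracking of the maximal element (in $B_i$ at even levels, $A_i$ at odd levels) is a nice filling-in of the detail the paper dismisses as ``easily seen.''
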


We notice that for any fixed integers $r\ge 2^{2l}-1$ and
 $m\ge 2^{2l+1}-1$, having \refe{m} appropriately modified (namely, setting
$m_i=2^{i-2l}m$ for $i\ge 2l$) and translating $A$ and $B$, one can
replace the progression $(2^{2l}-1)+(2^{2l+1}-1)\N_0$ in the statement of
Theorem~\reft{main} with the progression $r+m\N_0$; however, the relation
$A\cup B=\N_0$ will \emph{not} hold true any longer unless $r=2^{2l}-1$
and $m=2^{2l+1}-1$. This suggests the following question.
\begin{problem}
Given that $R_A=R_B$, $A\cup B=\N_0$, and $A\cap B=r+m\N_0$ with integer
$r\ge 0$ and $m\ge 2$, must there exist an integer $l\ge 1$ such that
$r=2^{2l}-1$, $m=2^{2l+1}-1$, and $A,B$ are as in Theorem~\reft{main}?
\end{problem}
The finite version of this question is as follows.
\begin{problem}
Given that $R_A=R_B$, $A\cup B=[0,m-1]$, and $A\cap B=\{r\}$ with integer
$r\ge 0$ and $m\ge 2$, must there exist an integer $l\ge 1$ such that
$r=2^{2l}-1$, $m=2^{2l+1}-1$, $A=A_{2l}$, and $B=B_{2l}$, with $A_{2l}$ nd
$B_{2l}$ as in the proof of Theorem~\reft{main}?
\end{problem}
We conclude our note with yet another natural problem.
\begin{problem}
Do there exist sets $A,B\seq\N_0$ with the infinite symmetric difference
and with $R_A=R_B$ which \emph{cannot} be obtained by a repeated
application of Lemma~\refl{main}?
\end{problem}

\subsection*{Acknowledgments}
Early stages of our work depended heavily on extensive computer programming
that was kindly carried out for us by Talmon Silver; we are indebted to him
for this contribution.

The first author is supported by the National Natural Science Foundation of
China, Grant No. 11371195, and the Priority Academic Program Development of
Jiangsu Higher Education Institutions (PAPD).

\vfill

\bigskip

\end{document}